\documentclass[11pt,a4paper]{article}
\usepackage[utf8]{inputenc}
\usepackage{geometry}
\geometry{margin=1in}
\usepackage{amsmath}
\usepackage{amsfonts}
\usepackage{amsthm}
\usepackage{amssymb}
\usepackage{esint}

\newcommand{\bu}{\mathbf{u}}
\newcommand{\bh}{\mathbf{H}}
\newcommand{\bv}{\mathbf{v}}

\let\div\relax
\DeclareMathOperator{\div}{div}

\newtheorem{definition}{Definition}
\newtheorem{theorem}[definition]{Theorem}
\newtheorem{lemma}[definition]{Lemma}
\newtheorem{remark}[definition]{Remark}

\begin{document}
\title{Liouville type theorem for the stationary equations of magneto-hydrodynamics }
\author{Simon Schulz \footnote{\texttt{simon.schulz1@maths.ox.ac.uk}, \newline Mathematical Institute, University of Oxford, \newline Woodstock Road, OX2 6GG, Oxford, United Kingdom.}}
\maketitle
\abstract{We show that any smooth solution $(\bu,\bh)$ to the stationary equations of magneto-hydrodynamics belonging to both spaces $L^6(\mathbb{R}^3)$ and $BMO^{-1}(\mathbb{R}^3)$ must be identically zero. This is an extension of previous results, all of which systematically required stronger integrability and the additional assumption $\nabla\bu , \nabla \bh \in L^2(\mathbb{R}^3)$, i.e., finite Dirichlet integral.

\medskip

\noindent \texttt{Theme:} Partial differential equations. \newline
\texttt{AMS classification codes:} 35B53, 35Q30, 76W05. \newline
\texttt{Keywords:} Liouville theorem; Caccioppoli inequality; Navier-Stokes equations; MHD.}

\section{Introduction}
Liouville type theorems arise naturally when considering the regularity of solutions to the incompressible Navier-Stokes equations. Development in this direction has been led most notably by Chae, Nadirashvili, Seregin, and \v{S}ver\'{a}k (\textit{c.f.}~\cite{ns chae,seregin sverak,seregin}). Intimately tied to the Navier-Stokes equations are the equations of magneto-hydrodynamics (MHD). The latter system models the motion of an incompressible fluid whose velocity field is affected by magnetic interactions, e.g., the movement of a magnetized plasma.

\medskip

Liouville type theorems have been known to hold for the MHD system, as demonstrated by the works \cite{chae,zyq}. In \cite{chae}, Chae proved that if a smooth solution of the stationary MHD equations is bounded in $L^3(\mathbb{R}^3)$ and has finite Dirichlet integral, then it is identically zero. Later, in \cite{zyq}, Zhang-Yang-Qiu proved that if a smooth solution of the stationary MHD equations is bounded in $L^{\frac{9}{2}}(\mathbb{R}^3)$ and has finite Dirichlet integral, then it is also identically zero. So far, no result exists without the finite Dirichlet integral assumption $\nabla\bu , \nabla\bh \in L^2(\mathbb{R}^3)$.

\medskip

The focus of this paper is to obtain a Liouville theorem for the equations of stationary MHD without the need for finite Dirichlet integral, and with only an $L^6(\mathbb{R}^3)$ integrability criterion. To this end, we closely follow the scheme outlined by Seregin in \cite{seregin}. In using this approach, we also reprove the original results in \cite{chae} and \cite{zyq} without the requirement $\nabla\bu,\nabla\bh\in L^2(\mathbb{R}^3)$. Although many of the estimates in this work are identical to those in \cite{seregin}, we go through them in detail for the sake of making this paper self-contained.

\newpage

\section{Preliminaries}
In what follows we employ the method of Seregin in \cite{seregin}, which first and foremost involves proving a Caccioppoli type inequality. Although Seregin's paper is concerned with the stationary incompressible Navier-Stokes equations, his proof makes a similar Caccioppoli type inequality hold for the equations of magneto-hydrodynamics. In light of this, we structure our paper in the same way as was done in \cite{seregin}.

\medskip

Below are the equations of stationary MHD. As per usual, $\bu$ is the velocity of the fluid and $\bh$ is the magnetic field.

\begin{equation}\label{eq:mhd}
\left\lbrace\begin{aligned}
&\div \bu = 0, \\
&\bu \cdot \nabla \bu - \Delta \bu +\nabla p= \bh \cdot \nabla \bh, \\
&\div \bh = 0, \\
&\bu\cdot\nabla\bh - \bh \cdot \nabla \bu = \Delta \bh.
\end{aligned}\right.
\end{equation}

\begin{definition}
We say that $\mathbf{f} \in BMO^{-1}(\mathbb{R}^3)$ if there exists a skew-symmetric tensor $d \in BMO(\mathbb{R}^3)$ such that $\mathbf{f} = \div d \iff f_i =  d_{ij,j}$ for $i=1,2,3$.
\end{definition}

\begin{remark}
Observe that the requirement that a vector be the divergence of a skew-symmetric tensor is ``equivalent'' to this vector being equal to a curl. Formally, we have
\begin{equation*}
\begin{aligned}
\mathbf{f} = \div d \text{ for } d=(d)_{i,j=1}^3 \text{ skew-symmetric } & \iff  \mathbf{f} = \nabla \times \mathbf{g} \text{ for } \mathbf{g}= \left(
\begin{array}{c}
d_{23}\\
-d_{13}\\
d_{12}
\end{array}
\right), \\
&\iff \div \mathbf{f} = 0.
\end{aligned}
\end{equation*}
\end{remark}

\begin{remark}
If $d\in BMO(\mathbb{R}^3)$, then
\[ \Gamma(s) := \sup_{x_0\in\mathbb{R}^3, r>0} \bigg( \fint_{B(x_0,r)} |d - [d]_{x_0,r}|^s \; dx \bigg)^{\frac{1}{s}} < \infty, \]
for each $1\leq s < \infty$. Here, $[d]_{x_0,r}$ denotes the mean value of $d$ in the ball $B(x_0,r)$. We will recurrently use the finiteness of this quantity in our later estimates.
\end{remark}
We will begin by showing the following theorem.
\begin{theorem}\label{thm:almost}
Let $(\bu,\bh)$ be a smooth solution of system \eqref{eq:mhd} with $\bu,\bh \in BMO^{-1}(\mathbb{R}^3)$. If we additionally require that $\bu,\bh \in L^{q}(\mathbb{R}^3)$ for $q \in (2,6)$, then $\bu \equiv 0$ and $\bh \equiv 0$.
\end{theorem}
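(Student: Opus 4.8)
The plan is to establish a Caccioppoli type inequality by testing the momentum equation against $\bu\varphi^2$ and the induction equation against $\bh\varphi^2$, where $\varphi$ is a smooth cutoff equal to $1$ on $B(0,R)$, supported in $B(0,2R)$, with $|\nabla\varphi|\lesssim 1/R$. Integrating by parts in the viscous terms produces $\int(|\nabla\bu|^2+|\nabla\bh|^2)\varphi^2$, and the divergence-free conditions $\div\bu=\div\bh=0$ let me rewrite every convective term as a divergence falling on $\varphi^2$. The structural point special to MHD is that the Lorentz contribution $\int(\bh\cdot\nabla\bh)\cdot\bu\,\varphi^2$ and the induction contribution $-\int(\bh\cdot\nabla\bu)\cdot\bh\,\varphi^2$ cancel after one integration by parts, leaving only a term carrying $\nabla\varphi^2$. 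Adding the two identities, I expect to arrive at
\begin{equation*}
\begin{aligned}
\int(|\nabla\bu|^2+|\nabla\bh|^2)\varphi^2 &= \tfrac12\int(|\bu|^2+|\bh|^2)\Delta\varphi^2 + \tfrac12\int(|\bu|^2+|\bh|^2)\,\bu\cdot\nabla\varphi^2 \\
&\quad + \int p\,\bu\cdot\nabla\varphi^2 - \int(\bu\cdot\bh)\,\bh\cdot\nabla\varphi^2,
\end{aligned}
\end{equation*}
every right-hand term being supported in the annulus $A_R=B(0,2R)\setminus B(0,R)$.

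Next I would exploit $\bu,\bh\in BMO^{-1}$. Writing $\bu=\div d$ and $\bh=\div e$ with $d,e$ skew-symmetric in $BMO$, and using that $d_{ij}\partial_i\partial_j\varphi^2=0$ by skew-symmetry, I rewrite each factor $\bu\cdot\nabla\varphi^2$ as $\partial_j\big((d_{ij}-[d]_{0,2R})\partial_i\varphi^2\big)$; subtracting the mean is legitimate precisely because a constant skew tensor still annihilates the symmetric Hessian of $\varphi^2$. Integrating by parts then turns the cubic term and the mixed term (using $\bh=\div e$ there) into integrals controlled by $\tfrac1R\int_{A_R}|\nabla(\bu,\bh)|\,|(\bu,\bh)|\,|d-[d]|$. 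Here Remark~3 is the key input: by Hölder with exponents $2,q,s$ where $\tfrac1s=\tfrac12-\tfrac1q\in(0,\tfrac13)$, so that $s\in(3,\infty)$, I bound $\|d-[d]\|_{L^s(A_R)}\lesssim\Gamma(s)R^{3/s}$, which is finite. Collecting powers yields a prefactor $R^{\frac12-\frac3q}$, strictly negative for $q<6$, multiplied by $\eta(R):=\|\bu\|_{L^q(A_R)}+\|\bh\|_{L^q(A_R)}$, which tends to $0$ as $R\to\infty$.

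The principal difficulty, in my view, is the pressure term together with the fact that the estimates above reintroduce the very quantity I wish to bound, namely the local Dirichlet energy $E(R):=\int_{B(0,R)}(|\nabla\bu|^2+|\nabla\bh|^2)$. For the pressure I would first record that the divergence of the momentum equation gives $-\Delta p=\partial_i\partial_j(u_iu_j-h_ih_j)$, whence Calder\'on--Zygmund theory yields $p\in L^{q/2}(\mathbb{R}^3)$ with $\|p\|_{L^{q/2}}\lesssim\|\bu\|_{L^q}^2+\|\bh\|_{L^q}^2$; combined with the skew-tensor representation (for which one may take $d,e$ with $\nabla d,\nabla e\in L^q$ via the Riesz representation $d_{ij}=(-\Delta)^{-1}(\partial_iu_j-\partial_ju_i)$) this controls $\int p\,\bu\cdot\nabla\varphi^2$ by the same kind of annular bound carrying a negative power of $R$. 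The outcome is an inequality of the shape
\[ E(R)\le \varepsilon(R) + C\,R^{\frac12-\frac3q}\,\eta(R)\,E(2R)^{1/2}, \]
with $\varepsilon(R),\eta(R)\to0$. Applying Young's inequality to the last term to generate a factor $\tfrac14 E(2R)$, and starting from the a priori local finiteness of $E$ together with a crude subquadratic growth bound from interior energy estimates, a standard dyadic iteration shows first that the Dirichlet integral is finite and then that $E(R)\to0$.

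Once $\nabla\bu=\nabla\bh\equiv0$, both fields are constant, and membership in $L^q(\mathbb{R}^3)$ forces $\bu\equiv0$ and $\bh\equiv0$. I expect the entire subtlety to lie in the interplay between the pressure estimate and the self-referential appearance of $E(2R)$: the gain from the $BMO$ oscillation of $d$ and $e$ costs one factor of the gradient, so the argument closes only because the sub-critical exponent $q<6$ supplies the strictly negative power $R^{\frac12-\frac3q}$ (and the negative fixed-point exponent $1-\tfrac6q$ in the iteration) that defeats both the volume growth of the annulus and the growth of the Dirichlet energy.
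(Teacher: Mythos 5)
Your localized energy identity is correct (the MHD cancellation between the Lorentz and induction terms is real), and your treatment of the cubic and mixed terms through the skew-symmetric $BMO$ tensors is sound: the H\"older splitting $(2,q,s)$ with $\frac1s=\frac12-\frac1q$, together with the oscillation bound $\|d-[d]\|_{L^s(B_{2R})}\lesssim\Gamma(s)R^{3/s}$, does produce the prefactor $R^{\frac12-\frac3q}$, negative for $q<6$; this is the same mechanism as the paper's estimates of its integrals $I_3,I_4$. The genuine gap is the pressure, in two ways. First, the theorem assumes nothing about $p$: it is merely a smooth function whose gradient equals $\Delta\bu-\bu\cdot\nabla\bu+\bh\cdot\nabla\bh$. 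Your claim $p\in L^{q/2}(\mathbb{R}^3)$ silently identifies $p$ with the Calder\'on--Zygmund solution of $-\Delta p=\partial_i\partial_j(u_iu_j-h_ih_j)$; the two differ a priori by a harmonic function, possibly of polynomial growth, and nothing in the hypotheses on $\bu,\bh$ excludes this. Second, and more seriously, even granting $p\in L^{q/2}$, the direct H\"older bound of $\int p\,\bu\cdot\nabla\varphi^2$ over the annulus carries the prefactor $R^{-1}\cdot R^{3(1-3/q)}=R^{2-9/q}$, negative only for $q<9/2$ --- exactly the Zhang--Yang--Qiu threshold your theorem is supposed to surpass. Your proposed repair (integrating by parts via a Riesz-representation tensor with $\nabla d\in L^q$) does not close the range $q\in[9/2,6)$: once the derivative falls on $p$ you must express $\nabla p$ through the equation, and the convective contributions produce terms like $R^{-1}\int_{A_R}|\bu||\nabla\bu||d-[d]|$ in which $d$ is now the Riesz tensor, which you cannot assume lies in $BMO$ (it differs from the hypothesis tensor by an uncontrolled divergence-free skew field); Morrey's inequality only gives oscillation $\lesssim R^{1-3/q}\|\nabla d\|_{L^q}$ on $B_{2R}$, and collecting powers yields $R^{\frac32-\frac6q}$, which is nonnegative precisely when $q\ge4$. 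So in the regime where the direct pressure estimate fails, the repaired one fails as well.

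There is a second gap in your iteration: $E(R)\le\delta(R)+\frac14E(2R)$ over dyadic balls concludes only if $4^{-n}E(2^nR)\to0$, i.e.\ if the Dirichlet integral grows subquadratically, and your appeal to ``interior energy estimates'' for this is circular --- local energy bounds for the nonlinear system reintroduce the pressure and the very Dirichlet energy in question, whose finiteness is exactly what is not assumed. The paper avoids both obstructions at once. It adds the two evolution equations, so that $\bv=\bu+\bh$ solves $(\bu-\bh)\cdot\nabla\bv-\Delta\bv=-\nabla p$ with drift $\bu-\bh=\div d$, $d\in BMO$ skew-symmetric, and it tests not with $\varphi\bar{\bv}$ but with $\varphi\bar{\bv}-\mathbf{w}$, where $\div\mathbf{w}=\div(\varphi\bar{\bv})$ and $\|\nabla\mathbf{w}\|_{L^s}\le c(r-\rho)^{-1}\|\bar{\bv}\|_{L^s}$: this test field is divergence-free, so the pressure integral vanishes identically and no property of $p$ is ever used. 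Moreover, the absorption of the Dirichlet term is carried out between radii $R/2\le\rho<r\le R$ inside one fixed ball, so only local finiteness of the energy (automatic for smooth solutions) is needed, with no growth hypothesis. The resulting Caccioppoli inequality with $s=q\in(2,6)$ and $\bv_0=0$ gives $\nabla\bv\equiv0$, hence $\bu\equiv-\bh$; the nonlinearities then cancel, leaving the linear system $\Delta\bu=\nabla p$, $\div\bu=0$, to which the same pressure-free argument applies and forces $\bu\equiv0$. To rescue your scheme you would have to build the divergence correction into your own test functions (replace $\varphi^2\bu$ by a divergence-free correction, or work with $\bv$ directly), at which point you would essentially be reproducing the paper's proof.
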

Note that the above covers the cases explored by Chae in \cite{chae} and Zhang-Yang-Qiu in \cite{zyq}. However, unlike them, we do not additionally require $\nabla\bu,\nabla\bh \in L^2(\mathbb{R}^3)$. A supplementary argument will then yield the result claimed in the abstract, which is contained in the theorem underneath.
\begin{theorem}\label{thm:final}
Let $(\bu,\bh)$ be a smooth solution of system \eqref{eq:mhd} with $\bu,\bh \in BMO^{-1}(\mathbb{R}^3)$. If we additionally require that $\bu,\bh \in L^{6}(\mathbb{R}^3)$, then $\bu \equiv 0$ and $\bh \equiv 0$.
\end{theorem}

\section{Proof of the main results}
\subsection{Caccioppoli type inequality}
Much like in \cite{seregin}, we have at the heart of our proof a Caccioppoli type inequality, which we develop in this portion of the paper. We state this inequality below.
\begin{lemma}
Let $(\bu,\bh)$ be a smooth solution to system \eqref{eq:mhd} with $\bu,\bh\in BMO^{-1}(\mathbb{R}^3)$, and let $\bv := \bu+\bh$. Then the Caccioppoli type inequality
\begin{equation}\label{eq:caccioppoli}
\int_{B(x_0,R/2)} \left| \nabla \bv \right|^2 \; dx \leq c R^{1-6/s} \left( \int_{B(x_0,R)} \left| \bv-\bv_0 \right|^s \; dx \right)^{\frac{2}{s}},
\end{equation}
holds for any ball $B(x_0,R)\subset \mathbb{R}^3$, any constant $\bv_0 \in \mathbb{R}^3$, and any $s>2$.
\end{lemma}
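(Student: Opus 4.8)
The plan is to first collapse the magneto-hydrodynamic system into a single Navier--Stokes-like equation for the Elsässer-type variable $\bv=\bu+\bh$, and then to run a weighted energy (Caccioppoli) estimate against it. Adding the momentum equation to the induction equation in \eqref{eq:mhd} and regrouping the quadratic terms as $\bu\cdot\nabla\bu+\bu\cdot\nabla\bh=\bu\cdot\nabla\bv$ and $-\bh\cdot\nabla\bh-\bh\cdot\nabla\bu=-\bh\cdot\nabla\bv$, one finds that $\bv$ solves
\[ -\Delta\bv + \mathbf{w}\cdot\nabla\bv + \nabla p = 0, \qquad \div\bv=\div\mathbf{w}=0, \]
where $\mathbf{w}:=\bu-\bh$. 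Since $\bu,\bh\in BMO^{-1}(\mathbb{R}^3)$, both $\bv$ and $\mathbf{w}$ are divergence-free elements of $BMO^{-1}(\mathbb{R}^3)$; in particular $\mathbf{w}=\div c$ for some skew-symmetric $c\in BMO(\mathbb{R}^3)$, a structural fact I will exploit crucially. This reduction is precisely what lets Seregin's Navier--Stokes argument carry over, with $\mathbf{w}$ now playing the role of the advecting field.

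Next I would fix a cutoff $\varphi\in C_c^\infty(B(x_0,R))$ with $\varphi\equiv1$ on $B(x_0,R/2)$, $0\le\varphi\le1$, $|\nabla\varphi|\lesssim R^{-1}$ and $|\nabla^2\varphi|\lesssim R^{-2}$, and test the reduced equation against $\varphi^2(\bv-\bv_0)$. Integrating by parts and using $\div\bv=\div\mathbf{w}=0$ yields (summation over repeated indices)
\[ \int \varphi^2|\nabla\bv|^2\,dx = -2\int \varphi\,(\partial_k\varphi)(\partial_k v_i)(v_i-v_{0i})\,dx + \int \varphi\,w_k(\partial_k\varphi)\,|\bv-\bv_0|^2\,dx + 2\int p\,\varphi\,(\partial_k\varphi)(v_k-v_{0k})\,dx. \]
The diffusion term is handled by Young's inequality, $2\varphi|\nabla\varphi||\nabla\bv||\bv-\bv_0|\le\tfrac12\varphi^2|\nabla\bv|^2+C|\nabla\varphi|^2|\bv-\bv_0|^2$, absorbing the first summand and estimating the second by Hölder ($L^{s/2}$--$L^{(s/2)'}$) together with $|\nabla\varphi|\lesssim R^{-1}$ and $|B(R)|\sim R^3$, which produces exactly $R^{1-6/s}(\int_{B(R)}|\bv-\bv_0|^s\,dx)^{2/s}$. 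For the convection term I would insert $w_k=\partial_\ell c_{k\ell}$ and integrate by parts once: the two terms in which the derivative lands on $\varphi$, namely $c_{k\ell}(\partial_\ell\varphi)(\partial_k\varphi)$ and $c_{k\ell}\partial_\ell\partial_k\varphi$, vanish identically, pairing a symmetric tensor against the skew-symmetric $c$. What survives is $-\int (c_{k\ell}-[c]_{x_0,R})\,\varphi\,(\partial_k\varphi)\,\partial_\ell|\bv-\bv_0|^2$ (subtracting the mean is legitimate, having zero derivative); bounding $|\partial_\ell|\bv-\bv_0|^2|\le2|\bv-\bv_0||\nabla\bv|$ and splitting again by Young absorbs the gradient, while the remainder is controlled by Hölder, the oscillation $\int_{B(R)}|c-[c]|^{2s/(s-2)}\,dx$ being finite and of the right size by the Remark on $\Gamma$.

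The pressure term is the crux and the main obstacle. Because $\div\bv=0$, a constant may be subtracted from $\bv$ in the elliptic identity $\Delta p=-\partial_i\partial_j(w_jv_i)$, so I may replace $v_i$ by $v_i-v_{0i}$, and the skew-symmetry of $c$ likewise neutralizes a constant drift. The difficulty is that $\mathbf{w}$ lies only in $BMO^{-1}(\mathbb{R}^3)$ and admits no local $L^p$ control, so I cannot estimate $p$ by naive Calderón--Zygmund on $\mathbf{w}\otimes\bv$; instead the bound must involve only the $BMO$-seminorm of the potential $c$ (through $\Gamma$) and $\|\bv-\bv_0\|_{L^s(B(R))}$. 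To achieve this I would split $p$ on $B(x_0,R)$ into (i) a Newtonian-potential part driven by the nonlinearity localized with a cutoff, estimated in $L^{s'}$ after transferring a derivative onto the $BMO$ potential $c$, and (ii) a remainder harmonic in $B(x_0,R)$, whose oscillation on the smaller ball is controlled by interior/mean-value estimates applied to $p-[p]_{x_0,R}$. Distributing $\nabla\varphi$ and using Hölder ($L^{s'}$--$L^{s}$) matches the scaling, and any residual factor of $\|\varphi\nabla\bv\|_{L^2}$ generated in the process is absorbed on the left. Arranging this decomposition so that precisely these two quantities appear is the delicate point.

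Finally I would choose the absorption parameters small, collect the three contributions, and track the powers of $R$: each estimate has been engineered to produce $c\,R^{1-6/s}(\int_{B(R)}|\bv-\bv_0|^s\,dx)^{2/s}$ with a constant $c$ depending only on $s$ and on the oscillation bounds $\Gamma(\cdot)$ of the potentials of $\bu$ and $\bh$. Since $\varphi\equiv1$ on $B(x_0,R/2)$, discarding $\varphi$ on the left then gives \eqref{eq:caccioppoli}, completing the argument.
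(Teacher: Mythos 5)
Your reduction of the system to $-\Delta\bv+(\bu-\bh)\cdot\nabla\bv+\nabla p=0$ and your treatment of the diffusion and convection terms are sound (the skew-symmetry cancellations and the mean-subtraction of the potential are exactly the right moves, and they match what the paper does for its terms $I_3$, $I_4$). The genuine gap is the pressure. By testing with $\varphi^2(\bv-\bv_0)$, which is not divergence-free, you force yourself to estimate $\int p\,\varphi\,(\partial_k\varphi)(v_k-v_{0k})\,dx$, and your proposed decomposition of $p$ is only a sketch: you yourself flag that ``arranging this decomposition so that precisely these two quantities appear is the delicate point,'' and that point is precisely where the argument does not close. The obstruction is the harmonic part of $p$ on $B(x_0,R)$: interior estimates control its oscillation on $B(x_0,R/2)$ only in terms of its size on $B(x_0,R)$, and nothing in the hypotheses ($\bv\in BMO^{-1}$, $\bv-\bv_0\in L^s$ locally) bounds that size --- the hypotheses give no local control on $p$ whatsoever, and a global Riesz-transform representation of $p$ is unavailable since $(\bu-\bh)\otimes\bv$ has no global integrability. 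Closing this would require a genuinely different (multi-scale, partial-regularity-style) argument that you have not supplied.

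The paper avoids the pressure entirely, and this is the idea missing from your proposal: instead of $\varphi^2(\bv-\bv_0)$, it corrects the test function to make it divergence-free. One solves the Dirichlet problem
\begin{equation*}
\Delta\psi=\div\big(\varphi(\bv-\bv_0)\big)\ \text{ in } B(x_0,r),\qquad \psi=0\ \text{ on }\partial B(x_0,r),
\end{equation*}
sets $\mathbf{w}=\nabla\psi$, and tests the equation against $\varphi(\bv-\bv_0)-\mathbf{w}$, which is divergence-free and compactly supported, so that $\int(\varphi(\bv-\bv_0)-\mathbf{w})\cdot\nabla p\,dx=0$ identically. The price is two extra terms involving $\nabla\mathbf{w}$, but these are controlled by the Calder\'on--Zygmund estimate $\|\nabla\mathbf{w}\|_{L^s}\leq c\,\|\nabla\varphi\cdot(\bv-\bv_0)\|_{L^s}$, which has exactly the right scaling; one then absorbs $\frac14\int_{B(x_0,r)}|\nabla\bv|^2$ and runs a standard iteration over radii $R/2\leq\rho<r\leq R$ to conclude. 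Your single-scale weighted-cutoff scheme is tidier for the terms it can handle, but without the divergence-free correction (or an equally substantive substitute for the pressure estimate) the proof is incomplete.
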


\begin{proof}
Begin by adding the two evolution equations together to obtain
\begin{equation}\label{eq:add}
\left\lbrace\begin{aligned}
&\div \bv = 0, \\
&(\bu - \bh) \cdot \nabla \bv - \Delta \bv = -\nabla p. 
\end{aligned}\right.
\end{equation}
Note that, since both $\bu$ and $\bh$ are in $BMO^{-1}(\mathbb{R}^3)$, we know that their difference $\bu-\bh$ and $\bv$ are also $BMO^{-1}$ vector fields. In particular, we know that there exists a skew-symmetric tensor $d \in BMO(\mathbb{R}^3)$ such that $\bu-\bh = \div d$.

\medskip

Take an arbitrary ball $B(x_0,R)$ and a non-negative cut-off function $\varphi \in C^{\infty}_c (B(x_0,R))$ with the properties: $\varphi (x) = 1$ in $B(x_0,\rho)$, $\varphi(x) = 0$ outside of $B(x_0,r)$, and $| \nabla \varphi (x) | \leq c/(r-\rho)$ for any $R/2 \leq \rho < r \leq R$. We let $\bar{d} = d - [d]_{x_0,R}$, where $[d]_{x_0,R}$ is the mean value of $d$ on the ball $B(x_0,R)$. From here on, we write $\bar{\bv} = \bv-\bv_0$, where $\bv_0$ is any constant in $\mathbb{R}^3$.

\medskip

Now, consider the following Dirichlet problem
\begin{equation*}
\left\lbrace
\begin{aligned}
&\div \mathbf{w} = \div (\varphi \bar{\bv}) \quad &&\text{in } B(x_0,r), \\
&\mathbf{w} = 0 \quad &&\text{on } \partial B(x_0,r).
\end{aligned} \right.
\end{equation*}
Since the right-hand side of the equation integrates to zero (by the divergence theorem) and is locally integrable, we deduce from Theorem 3.6 in Chapter 1 of \cite{sereginbook} (or from \cite{bog}) that there exists $\mathbf{w} \in W^{1,s}_0(B(x_0,r))$ solving the above, and for which the following inequality holds for $1 < s < \infty$,
\begin{equation*}
\begin{aligned}
\int_{B(x_0,r)} | \nabla \mathbf{w} |^s \; dx &\leq  c \int_{B(x_0,r)} | \div (\varphi \bar{\bv}) |^s \; dx, \\
&= c \int_{B(x_0,r)}  | \nabla \varphi \cdot \bar{\bv} |^s \; dx, \\
& \leq \frac{c}{(r-\rho)^s} \int_{B(x_0,r)} |\bar{\bv}|^s.
\end{aligned}
\end{equation*}
Here, $c=c(s)$ and is independent of $x_0$ and $R$.

\medskip

Next, we follow the bounds as in \cite{seregin}, i.e., we test the second equation in \eqref{eq:add} against $\varphi \bar{\bv} - \mathbf{w}$, to get
\begin{equation*}
\begin{aligned}
\int_{B(x_0,r)} \varphi |\nabla \bv |^2 \; dx = & -\int_{B(x_0,r)} \nabla \bv : (\nabla \varphi \otimes \bar{\bv}) \; dx + \int_{B(x_0,r)} \nabla \mathbf{w} : \nabla \bv \; dx \\
& - \int_{B(x_0,r)} (\div \bar{d} \cdot \nabla \bv) \cdot \varphi \bar{\bv} \; dx + \int_{B(x_0,r)} (\div \bar{d} \cdot \nabla \bv) \cdot \mathbf{w} \; dx.
\end{aligned}
\end{equation*}
We denote the previous integrals by $I_1, \dots, I_4$.
\begin{remark}
The term involving $\nabla p$ has vanished, since $\varphi \bar{\bv} - \mathbf{w}$ is divergence-free and
\begin{equation*}
\int_{B(x_0,r)} \big(\varphi\bar{\bv}-\mathbf{w}\big)\cdot \nabla p \; dx = -\int_{B(x_0,r)} \div\big(\varphi\bar{\bv}-\mathbf{w}\big) p \; dx =0,
\end{equation*}
where the boundary term has vanished due to the compact support of our test function.
\end{remark}

Now we bound the numbered integrals $I_1, \dots, I_4$.
\begin{equation*}
\begin{aligned}
|I_1| & \leq \int_{B(x_0,r)} |\nabla \bv | |\nabla \varphi | | \bar{\bv} | \; dx, \\
& \leq \frac{c}{r-\rho}\bigg( \int_{B(x_0,r)} | \nabla \bv |^2 \; dx \bigg)^{\frac{1}{2}} \bigg( \int_{B(x_0,r)} | \bar{\bv} |^2 \; dx \bigg)^{\frac{1}{2}}, \\
& \leq \frac{c}{r-\rho}\bigg( \int_{B(x_0,r)} | \nabla \bv |^2 \; dx \bigg)^{\frac{1}{2}} \bigg( \int_{B(x_0,r)} | \bar{\bv} |^{s} \; dx \bigg)^{\frac{1}{s}} \bigg( \int_{B(x_0,r)} dx \bigg)^{\frac{s-2}{2s}}, \\
& = \frac{c}{r-\rho}\bigg( \int_{B(x_0,r)} | \nabla \bv |^2 \; dx \bigg)^{\frac{1}{2}} \bigg( \int_{B(x_0,r)} | \bar{\bv} |^{s} \; dx \bigg)^{\frac{1}{s}} R^{3 \left({\frac{s-2}{2s}}\right)}.
\end{aligned}
\end{equation*}
So, we have
\begin{equation*}
|I_1| \leq \frac{c R^{3\left(\frac{s-2}{2s}\right)}}{r-\rho}\bigg( \int_{B(x_0,r)} | \nabla \bv |^2 \; dx \bigg)^{\frac{1}{2}} \bigg( \int_{B(x_0,r)} | \bar{\bv} |^{s} \; dx \bigg)^{\frac{1}{s}}.
\end{equation*}
Similarly,
\begin{equation*}
\begin{aligned}
|I_2| & \leq \int_{B(x_0,r)} |\nabla \bv |  |\nabla \mathbf{w} | \; dx, \\
& \leq \bigg( \int_{B(x_0,r)} | \nabla \bv |^2 \; dx \bigg)^{\frac{1}{2}} \bigg( \int_{B(x_0,r)} |\nabla \mathbf{w} |^2 \; dx \bigg)^{\frac{1}{2}}, \\
& \leq \bigg( \int_{B(x_0,r)} | \nabla \bv |^2 \; dx \bigg)^{\frac{1}{2}} \bigg( \int_{B(x_0,r)} | \nabla \mathbf{w} |^s \; dx \bigg)^{\frac{1}{s}} \bigg( \int_{B(x_0,r)} dx \bigg)^{\frac{s-2}{2s}}, \\
& \leq \frac{c R^{3\left(\frac{s-2}{2s}\right)}}{r-\rho} \bigg( \int_{B(x_0,r)} | \nabla \bv |^2 \; dx \bigg)^{\frac{1}{2}} \bigg( \int_{B(x_0,r)} |\bar{\bv}|^{s} \; dx \bigg)^{\frac{1}{s}}.
\end{aligned}
\end{equation*}
Note that we have implicitly assumed that $s>2$.

\medskip

For $I_3$ and $I_4$ we need to use the skew-symmetry of $d$.
\begin{equation*}
\begin{aligned}
|I_3| & = \bigg| \int_{B(x_0,r)} \bar{d}_{jm,m} v_{i,j} \varphi \bar{v}_i \; dx \bigg| = \bigg| \int_{B(x_0,r)} \bar{d}_{jm} v_{i,j} \varphi_m \bar{v}_i \; dx \bigg|, \\
& \leq \frac{c}{r-\rho} \bigg( \int_{B(x_0,r)} | \nabla \bv |^2 \; dx \bigg)^{\frac{1}{2}} \bigg( \int_{B(x_0,r)} |\bar{d}|^2 |\bar{\bv}|^2 \; dx \bigg)^{\frac{1}{2}}, \\
& \leq \frac{c}{r-\rho} \bigg( \int_{B(x_0,r)} | \nabla \bv |^2 \; dx \bigg)^{\frac{1}{2}} \bigg( \int_{B(x_0,r)}  |\bar{\bv}|^{s} \; dx \bigg)^{\frac{1}{s}} \bigg( \int_{B(x_0,r)}  |\bar{d}|^{\frac{2s}{s-2}} \; dx \bigg)^{\frac{s-2}{2s}}, \\
& \leq \frac{c R^{3\left(\frac{s-2}{2s}\right)}}{r-\rho}\bigg( \int_{B(x_0,r)} | \nabla \bv |^2 \; dx \bigg)^{\frac{1}{2}} \bigg( \int_{B(x_0,r)}  |\bar{\bv}|^{s} \; dx \bigg)^{\frac{1}{s}}.
\end{aligned}
\end{equation*}
For the fourth integral
\begin{equation*}
\begin{aligned}
|I_4| & = \bigg| \int_{B(x_0,r)} \bar{d}_{jm,m} v_{i,j} w_i \; dx \bigg| = \bigg| \int_{B(x_0,r)} \bar{d}_{jm} v_{i,j} w_{i,m} \; dx \bigg|, \\
& \leq \bigg( \int_{B(x_0,r)} | \nabla \bv |^2 \; dx \bigg)^{\frac{1}{2}} \bigg( \int_{B(x_0,r)} |\bar{d}|^2 |\nabla \mathbf{w}|^2 \; dx \bigg)^{\frac{1}{2}}, \\
& \leq  \bigg( \int_{B(x_0,r)} | \nabla \bv |^2 \; dx \bigg)^{\frac{1}{2}} \bigg( \int_{B(x_0,r)} |\nabla \mathbf{w} |^s \bigg)^{\frac{1}{s}}  \bigg( \int_{B(x_0,r)} |\bar{d}|^{\frac{2s}{s-2}} \; dx \bigg)^{\frac{s-2}{2s}}, \\
& \leq \frac{c R^{3\left(\frac{s-2}{2s}\right)}}{r-\rho} \bigg( \int_{B(x_0,r)} | \nabla \bv |^2 \; dx \bigg)^{\frac{1}{2}} \bigg( \int_{B(x_0,r)} |\bar{\bv}|^{s} \; dx \bigg)^{\frac{1}{s}}.
\end{aligned}
\end{equation*}
In total, we have
\begin{equation*}
\begin{aligned}
\int_{B(x_0,\rho)} |\nabla \bv |^2 \; dx \leq \frac{c R^{3\left(\frac{s-2}{2s}\right)}}{r-\rho}\bigg( \int_{B(x_0,r)} | \nabla \bv |^2 \; dx \bigg)^{\frac{1}{2}}\bigg( \int_{B(x_0,R)} |\bar{\bv}|^{s} \; dx \bigg)^{\frac{1}{s}}.
\end{aligned}
\end{equation*}
Applying a weighted Cauchy-Schwarz inequality, we obtain
\begin{equation*}
\begin{aligned}
\int_{B(x_0,\rho)} |\nabla \bv |^2 \; dx \leq \frac{1}{4} \int_{B(x_0,r)} | \nabla \bv |^2 \; dx + \frac{c R^{3\left(\frac{s-2}{s}\right)}}{(r-\rho)^2}\bigg( \int_{B(x_0,R)} |\bar{\bv}|^{s} \; dx \bigg)^{\frac{2}{s}},
\end{aligned}
\end{equation*}
Suitable iterations then give the following Caccioppoli type inequality
\begin{equation*}
\int_{B(x_0,R/2)} |\nabla \bv|^2 \; dx \leq c R^{3\left(\frac{s-2}{s}\right)-2}\bigg( \int_{B(x_0,R)}|\bar{\bv}|^s \bigg)^{\frac{2}{s}},
\end{equation*}
as required.
\end{proof}
\begin{remark}
The positive constant $c$ is independent of $x_0$ and $R$, and depends only on $s$.
\end{remark}

\subsection{The proof of Theorem \ref{thm:almost}}
The proof of Theorem \ref{thm:almost} rests entirely on the observation that we can make the exponent $1-6/s$ negative in the Caccioppoli type inequality \eqref{eq:caccioppoli}. In view of this, we present our proof.
\begin{proof}[Proof of Theorem \ref{thm:almost}]
Suppose $\bu,\bh \in L^q (\mathbb{R}^3)$ for $2<q<6$, and let $\varepsilon := 6/q-1$. Observe that $\varepsilon>0$, so by choosing $\bv_0=0$ the Caccioppoli type inequality \eqref{eq:caccioppoli} now reads
\begin{equation*}
\int_{B(x_0,R/2)} |\nabla \bv|^2 \; dx \leq c R^{-\varepsilon}|| \bv ||^2_{L^q(\mathbb{R}^3)}.
\end{equation*}
By taking the limit as $R \to \infty$ we recover $\nabla \bv \equiv 0$. This implies that $\bv$ is constant, but since $\bv \in L^q(\mathbb{R}^3)$ we know that this constant must be zero. Hence, $\bu \equiv -\bh$.

\medskip

Using this relation, we know from the first evolution equation for $\bu$ in \eqref{eq:mhd} that
\begin{equation}\label{eq:side}
\left\lbrace\begin{aligned}
&\div \bu = 0, \\
&\Delta \bu = \nabla p.
\end{aligned}\right.
\end{equation}
As before, we can find a $\mathbf{w} \in W^{1,q}_0(B(x_0,r))$ such that $\div \mathbf{w} = \div (\varphi \bar{\bu})$, where $\bar{\bu} = \bu-\bu_0$ for some arbitrary constant $\bu_0$ in $\mathbb{R}^3$. Here, $\varphi$ is the same cut-off function that we used in the proof of the Caccioppoli type inequality. Testing \eqref{eq:side} against $\varphi \bar{\bu} - \mathbf{w}$ we obtain
\begin{equation*}
\int_{B(x_0,r)} \varphi |\nabla \bu |^2 \; dx = -\int_{B(x_0,r)} \nabla \bu : (\nabla\varphi \otimes \bar{\bu}) \; dx + \int_{B(x_0,r)} \nabla \mathbf{w} : \nabla \bu \; dx.
\end{equation*}
Once again, we obtain
\begin{equation*}
\int_{B(x_0,R/2)} |\nabla \bu|^2 \; dx \leq cR^{1-6/q} \bigg(\int_{B(x_0,R)} |\bar{\bu}|^q \; dx\bigg)^{\frac{2}{q}},
\end{equation*}
so choosing $\bu_0 =0$ we get
\begin{equation*}
\int_{B(x_0,R/2)} |\nabla \bu|^2 \; dx \leq cR^{-\varepsilon} || \bu ||^2_{L^q(\mathbb{R}^3)}.
\end{equation*}
Taking the limit as $R \to \infty$ we recover $\bu \equiv 0$, which concludes the proof of the theorem.
\end{proof}

\subsection{The proof of Theorem \ref{thm:final}}
In the case where $s=6$ we cannot argue as we did previously. Putting $s=6$ and $\bv_0=0$ in \eqref{eq:caccioppoli} yields
\begin{equation*}
\int_{B(x_0,R/2)} |\nabla \bv|^2 \; dx \leq c || \bv ||^{2}_{L^6(\mathbb{R}^3)}.
\end{equation*}
Hence, passing to the limit $R \to \infty$ gives the reverse Sobolev inequality
\begin{equation}\label{eq:reverse sobolev}
|| \nabla \bv ||_{L^2(\mathbb{R}^3)} \leq c || \bv ||_{L^6(\mathbb{R}^3)}.
\end{equation}
This is not particularly useful in itself, and does not readily produce a reverse Sobolev inequality for the individual vector fields $\bu$ and $\bh$. Instead, one can pick $s=3$ and $\bv_0 = [\bv]_{x_0,R}$ with the aim of constructing an inequality between maximal functions. This is precisely how the proof of Theorem \ref{thm:final} runs, which we elaborate on in the next few paragraphs.

\begin{proof}[Proof of Theorem \ref{thm:final}]
Firstly recall the Gagliardo-Nirenberg type inequality
\begin{equation}\label{eq:gag}
||\bar{\bv}||_{L^{3}(B(x_0,R))} \leq c || \nabla \bv ||_{L^{\frac{3}{2}}(B(x_0,R))}.
\end{equation}
Now choose $s=3$ and $\bv_0 = [\bv]_{x_0,R}$ in the Caccioppoli type inequality \eqref{eq:caccioppoli}, and couple this with \eqref{eq:gag} to obtain the reverse H\"{o}lder inequality
\begin{equation}\label{eq:reverse holder}
\fint_{B(x_0,R/2)} |\nabla \bv|^2 \; dx \leq c \bigg(\fint_{B(x_0,R)} |\nabla \bv|^{\frac{3}{2}} \; dx \bigg)^{\frac{4}{3}},
\end{equation}
where $c$ is independent of $x_0$ and $R$, as per usual.

\medskip

Define the function $h:= |\nabla \bv|^{\frac{3}{2}} \in L^{\frac{4}{3}}(\mathbb{R}^3)$ and let
\begin{equation*}
M_h (x_0) = \sup_{R>0} \fint_{B(x_0,R)} h(x) \; dx
\end{equation*}
be its Hardy-Littlewood maximal function. Now the reverse H\"{o}lder inequality \eqref{eq:reverse holder} reads
\begin{equation*}
M_{h^{\frac{4}{3}}}(x_0) \leq c M^{\frac{4}{3}}_h(x_0) \qquad \forall x_0 \in \mathbb{R}^3.
\end{equation*}
From the maximal function inequality in $L^p(\mathbb{R}^3)$ for $p>1$ (\textit{c.f.}~\cite{stein}), we know that there exists a universal constant $c_0>0$ such that
\begin{equation*}
\begin{aligned}
\int_{\mathbb{R}^3} M_h^{\frac{4}{3}} (x) \; dx &\leq c_0 \int_{\mathbb{R}^3} h^{\frac{4}{3}} (x) \; dx, \\
&= c_0 \int_{\mathbb{R}^3} |\nabla \bv|^2 \; dx, \\
&\leq c || \bv ||^2_{L^6(\mathbb{R}^3)},
\end{aligned}
\end{equation*}
where the last inequality is exactly \eqref{eq:reverse sobolev}. Thus we have shown that both $h^{\frac{4}{3}}$ and its maximal function $M_{h^{\frac{4}{3}}}$ are $L^1(\mathbb{R}^3)$ functions, which is only possible if $h \equiv 0$ (\textit{c.f.}~\cite{stein}). This implies that $\bv$ is constant, thus once again we arrive at $\bu \equiv -\bh$.

\medskip

We now show that we must have $\bu \equiv 0$. Making use of the relation $\bu \equiv -\bh$ as we did in the proof of Theorem \ref{thm:almost}, we recover \eqref{eq:side} and the Caccioppoli type inequality
\begin{equation*}
\int_{B(x_0,R/2)} |\nabla \bu|^2 \; dx \leq cR^{1-6/q} \bigg(\int_{B(x_0,R)} |\bar{\bu}|^q \; dx\bigg)^{\frac{2}{q}}.
\end{equation*}
Picking $q=6$ and $\bu_0 = 0$ we recover
\begin{equation*}
|| \nabla \bu||_{L^2(\mathbb{R}^3)} \leq c || \bu ||_{L^6(\mathbb{R}^3)},
\end{equation*}
as expected. Selecting $q=3$ and $\bu_0 = [\bu]_{x_0,R}$ and using the same strategy as before, we arrive at the maximal function inequality
\begin{equation*}
M_{\tilde{h}^{\frac{4}{3}}}(x_0) \leq c M^{\frac{4}{3}}_{\tilde{h}}(x_0) \qquad \forall x_0 \in \mathbb{R}^3,
\end{equation*}
where $\tilde{h} = |\nabla \bu|^{\frac{3}{2}}$. The same argument as before then yields $\bu \equiv 0$, as required.
\end{proof}

\section*{Acknowledgements}
This work was supported by the Engineering and Physical Sciences Research Council grant [EP/L015811/1]. The author wishes to thank Gui-Qiang Chen and Gregory Seregin for useful discussions.

\newpage

\end{document}